\newcommand{\khat}{\skew1\widehat{k}}
\newcommand{\pmat}[1]{\begin{pmatrix}#1\end{pmatrix}} 
   \def\innerprod(#1,#2){\langle#1,#2\rangle} 
\newcommand{\Newcommand}[2]%
   {\ifx#1\undefined \newcommand{#1}{#2} \else \renewcommand{#1}{#2} \fi}
  \newcommand{\mod}[1]{|#1|}
  \renewcommand{\mod}[1]{|#1|}
  \newcommand{\Range}{\mathop{\operator@font{range}}}
  \renewcommand{\Range}{\mathop{\operator@font{range}}}
\Newcommand{\Re} {\mathbb{R}}           
\Newcommand{\subject}{\mathop{\operator@font{subject\ to}}}  
\providecommand{\det}  {\mathop{\operator@font{det}}}
\providecommand{\diag} {\mathop{\operator@font{diag}}}
\providecommand{\dim}  {\mathop{\operator@font{dim}}}
\providecommand{\exp}  {\mathop{\operator@font{exp}}}
\providecommand{\rank} {\mathop{\operator@font{rank}}}
\providecommand{\vec}  {\mathop{\operator@font{vec}}}
\newcommand{\Angle}{\mathop{\operator@font{angle}}}
\newcommand{\argmin}{\mathop{\operator@font{argmin}}}
\newcommand{\argmax}{\mathop{\operator@font{argmax}}}
\newcommand{\conv}{\mathop{\operator@font{conv}}}
\newcommand{\eig}{\mathop{\operator@font{eig}}}
\newcommand{\svSet}{\mathop{\operator@font{sv}}}
\newcommand{\algmult}{\mathop{\operator@font{am}}}
\newcommand{\geomult}{\mathop{\operator@font{gm}}}
\newcommand{\T}{^T\!}
\newcommand{\realpart}{\mathop{\operator@font{Re}}\,}
\newcommand{\mgap}{\;\;}
\newcommand{\Cpp}{C\raise3pt\hbox{\tiny++}}
\newcommand{\maxim}{\mathop{\operator@font{maximize}}}
\newcommand{\minim}{\mathop{\operator@font{minimize}}}
\newcommand{\cond}{{\operator@font{cond}}}
\newcommand{\sep}{\mathop{\operator@font{sep}}}
\newcommand{\In}{\mathop{\operator@font{In}}}
\newcommand{\Signature}{\mathop{\operator@font{Sn}}}
\newcommand{\boundary}{\mathop{\operator@font{bnd}}}
\newcommand{\interior}{\mathop{\operator@font{int}}}
\newcommand{\Int}{\mathop{\operator@font{int}}}
\newcommand{\Null}{\mathop{\operator@font{null}}}
\newcommand{\columnRank}{\mathop{\operator@font{column\ rank}}}
\newcommand{\rowRank}{\mathop{\operator@font{row\ rank}}}
\newcommand{\Rank}{\mathop{\operator@font{rank}}}
\newcommand{\op}{\mathop{\operator@font{op}}}
\newcommand{\re}{\mathop{\operator@font{re}}}
\newcommand{\range}{\mathop{\operator@font{range}}}
\newcommand{\sign}{\mathop{\operator@font{sign}}}
\newcommand{\sgn}{\mathop{\operator@font{sgn}}}
\newcommand{\Span}{\mathop{\operator@font{span}}}
\newcommand{\trace}{\mathop{\operator@font{trace}}}
\newcommand{\Grad}{\nabla\!}
\newcommand{\cents}{\hbox{\operator@font{\rlap/c}}}
\newcommand{\half}  {{\textstyle\frac12}}
\newcommand{\norm}[1]{\|#1\|}
\newcommand{\spose}[1]{\hbox to 0pt{#1\hss}}
\newcommand{\wordss}[1]{\quad\text{#1}\quad}
\newcommand{\submax}{_{\mathrm{max}}}
\newcommand{\submin}{_{\mathrm{min}}}
\newcommand{\E}{_{\scriptscriptstyle E}}
\Newcommand{\R}{_{\scriptscriptstyle R}}
\newcommand{\starsymbol}{\ast}
\newcommand{\superstar}{^\starsymbol}
\newcommand{\SUPERSTAR}{^{\raise 0.5pt\hbox{$\starsymbol$}}}
\renewcommand{\SUPERSTAR}{\superstar}
\newcommand{\vstar}{v\superstar}
\newcommand{\wstar}{w\superstar}
\newcommand{\xstar}{x\superstar}
\newcommand{\ystar}{y\superstar}
\newcommand{\alphabar}{\skew3\bar\alpha}
\newcommand{\muhat}{\skew3\widehat \mu}
\providecommand{\varDelta}  {{\mathit\Delta}}
\newcommand{\Deltait}{\varDelta}
\newcommand{\setS}{\mathcal{S}}
\newcommand{\subM}{_{\scriptscriptstyle M}}
\newcommand{\Ascr}{{\mathcal A}}
\newcommand{\Fscr}{{\mathcal F}}
\newcommand{\Mscr}{{\mathcal M}}
\newcommand{\Sscr}{{\mathcal S}}
\newcommand{\Wscr}{{\mathcal W}}
\newcommand{\Hbar}{\skew5\bar H}
\newcommand{\Htilde}{\widetilde H}
\newcommand{\uhat}{\skew3\widehat u}
\newcommand{\what}{\skew3\widehat w}
\begin{document}
\title{Direction of negative curvature for regularized SQP}
\author{Phillip Gill, Vyacheslav Kungurtsev, Daniel Robinson}
\maketitle
\section{Introduction}
This note discusses the computation and use of a direction of negative curvature in 
  the regularized sequential quadratic programming 
 primal-dual augmented Lagrangian method (pdSQP) of Gill and Robinson 
\cite{GilR10}, \cite{GilR11} for the 
purpose of ensuring convergence towards second-order optimal points.
Section \ref{s:comp} discusses how to compute a direction of negative curvature 
using appropriate matrix factorizations. 
Section \ref{s:changes} discusses the specific relevant changes to the 
algorithm. Section \ref{s:conv1} discusses the changes in the convergence 
results established by Gill and Robinson \cite{GilR11}, showing that the desired 
convergence results continue to hold. 
 Section 
\ref{s:conv2} discusses global convergence to points satisfying the 
second-order necessary optimality conditions. 
\section{Direction of negative curvature}\label{s:comp}
\subsection{The active-set estimate}
An index set $\Wscr_k$ is maintained that consists of the variable indices that 
estimate which components of $x$ on their bounds. This set determines 
the the space in which
to calculate the directions of negative curvature. 
The tolerance for an index to be in $\Wscr_k$ must converge to zero. 
A test such as $i\in\Wscr_k$ if $[x_k]_i\le \min\{\mu_k,
\epsilon_a\}$, would be appropriate for the purpose of forming a $\Wscr_k$ for 
convexification, initializing the QP, and obtaining a direction of negative 
curvature. Otherwise, it would be necessary to use  three different 
factorizations.

\subsection{Calculating the direction}
Recall that in pdSQP, the QP must use a Lagrangian Hessian $\Htilde$ such that 
$\Htilde+\frac{1}{\mu}J\T J$ is positive definite.
The process for forming the requisite $\Htilde$, as well as calculating 
a direction of negative curvature
begins with the inertia-controlling factorization of the KKT matrix 
(see Forsgren \cite{For02}). 
Consider the KKT matrix, 
\begin{equation}\label{eqn:kkt}
\pmat{
H_F & J^T_F \\
J_F & -\mu I_{|F|}
},
\end{equation}
with $F$ the set of estimated free variables (those not in $\Wscr_k$), 
and $I_{|F|}$ the identity matrix with $|F|$ rows and columns.

The algorithm begins an LBL$^T$ factorization of the 
KKT matrix, where $L$ is lower triangular and $B$ is a symmetric diagonal 
with $1\times 1$ and $2\times 2$ diagonal blocks. Standard pivoting strategies 
are described in the literature (see Bunch and Parlett \cite{BunP71}, Fletcher 
\cite{Fle76}, and Bunch and Kaufman \cite{BunK80}). Let the lower-right block 
be defined as $D=-\mu I_{|F|}$. 

At step $k$ of the factorization, let the partially factorized matrix have the 
following structure:
$$
\pmat{L_1 & 0 \\ L_2 & I} \pmat{B & 0 \\ 0 & A} \pmat{L^T_1 & L^T_2 \\ 0 & I},
$$
with $L_1$ being lower triangular, $I$ the identity of appropriate size, and $A$ the 
matrix remaining to be factorized. Let $A$ be partitioned as $A=\pmat{a & b\T \\ b & C}$. 
If the top left element is chosen as a $1\times 1$ pivot, at the next step, 
$$
\pmat{L_1 & 0 & 0 \\ L_3 & 1 & 0 \\ L_4 & a^{-1}b & I} \pmat{B & 0 & 0 \\ 0 & a & 0 \\ 
0 & 0 & C-ba^{-1} b\T} \pmat{L^T_1 & L^T_3 & L^T_4 \\ 0 & 1 & a^{-1} b\T \\ 0 & 0 & I}.   
$$

Let $S=C-ba^{-1} b$ be the \emph{Schur complement} of the factorization. 
The matrix $S$ is factorized at the next step.

For inertia control, this factorization has two stages. In the first 
stage, we restrict 
the factorization to allow only for pivots of type $H^+$, $D^-$ or $HD$.
This means that an element $(i,j)$ of $H$
is selected such that $H_{ij}>0$, a diagonal element of $D$ is selected, 
or $(i_1,i_2,j_1,j_2)$ is selected such that $(i_1, j_1)$ is an element of 
$H$, $(i_2,j_2)$ is an element of $D$ and $S_k[(i_1,i_2),(j_1,j_2)]$ has 
mixed eigenvalues. This procedure is continued until there are no such 
remaining pivots.

The KKT matrix can be partitioned as
$$
\pmat{ H_{11} & H_{12} & J^T_1 \\
H_{21} & H_{22} & J^T_2 \\
J_1 & J_2 & -\mu I},
$$
where, all of the pivots have come from the rows and 
columns of $H_{11}$, $J_1$, and $-\mu I$. 
At the end of the first stage, the factorization can be written as:
\begin{equation}\label{eqn:factor}
\pmat{L_1 & 0 \\ L_2 & I} \pmat{B & 0 \\ 0 & H_{22}-K_{21}K^{-1}_{11} K_{12} } 
\pmat{L^T_1 & L^2_2 \\ 0 & I}. 
\end{equation}
Let $S=H_{22}-K_{21}K^{-1}_{11} K_{12}$. 
Proposition 3 of Forsgren \cite{For02} shows that if $\delta I$ is added 
to $H_{22}$ such that $\delta>||S||$ 
then $K_F$ has the correct inertia.
In practice this $\delta$ is excessively large for the purpose of constructing 
the appropriate matrix with the required eigenvalues, 
but this result does indicate that such a constant exists. 

Instead of proceeding to the second phase of this factorization,
the procedure of Lemma 2.4 in Forsgren et al. \cite{ForGM93} is applied to 
$S$ to compute $\uhat$, 
a direction of negative curvature for $S$.
The procedure to calculate this $\uhat$ is as follows: 

Let $\rho=\max_{i,j}|S_{ij}|$ with $|S_{qr}|=\rho$. Define $\uhat$ as the solution to:
\begin{equation}\label{eqn:uhat}
\pmat{
L_1 & 0 \\
L_2 & I} \uhat = \sqrt{\rho}h,
\end{equation}
where 
\begin{equation*}
h 
= \left\{ 
\begin{array}{l@{\hspace{3ex}}l}
e_q & \mbox{if } q=r, \\
\frac{1}{\sqrt{2}}(e_q-\sgn(b_{qr})e_r) & \mbox{otherwise.}
\end{array}
\right.
\end{equation*}

This $\uhat$ satisfies $\uhat\T S \uhat\le \gamma 
\lambda\submin(S) ||\uhat||^2$, with $\gamma$ independent of $S$.

The following bounds are important for the subsequent second-order convergence theory.

\begin{lemma}\label{lem:curvbounds}
Let $\uhat$ be defined as in \eqref{eqn:uhat}, $S$ be the Schur complement 
of the partially factorized matrix \eqref{eqn:factor}, $J_F$ and $H_F$ defined 
as in \eqref{eqn:kkt}, and $Z$ a matrix consisting of columns for the basis 
of the null-space of $J_F$, then

$$\frac{\hat{u}\T S \hat{u}}{\gamma ||\hat{u}||^2}\le \lambda\submin(S) \le 
\lambda\submin(H_F+\frac{1}{\mu}J^T_F J_F ) \le \lambda\submin(Z\T H_F Z).
$$
\end{lemma}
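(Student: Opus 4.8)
The plan is to split the displayed chain into its three constituent inequalities and treat each separately, moving from left to right. The leftmost inequality is essentially a restatement of the property of $\uhat$ quoted from Lemma 2.4 of Forsgren \etal\ just before the statement: that lemma gives $\uhat^T S\uhat \le \gamma\,\lambda\submin(S)\,\|\uhat\|^2$ with $\gamma>0$ independent of $S$, and dividing through by $\gamma\|\uhat\|^2>0$ yields $\uhat^T S\uhat/(\gamma\|\uhat\|^2)\le\lambda\submin(S)$, which is exactly the first inequality. Nothing further is needed there.

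For the middle inequality I would first identify $S$ as a Schur complement of $M := H_F+\frac1\mu J^T_F J_F$. Writing $M$ as the Schur complement of the $-\mu I$ block in the KKT matrix \eqref{eqn:kkt}, namely $M = H_F - J^T_F(-\mu I)^{-1}J_F$, and recalling from \eqref{eqn:factor} that the first stage eliminates precisely the rows and columns of $H_{11}$, $J_1$ and all of $-\mu I$, the block removed is $K_{11} = \pmat{H_{11} & J^T_1 \\ J_1 & -\mu I}$, so $S = K_F/K_{11}$. The Haynsworth quotient formula lets me perform this elimination in two stages, first $-\mu I$ and then the leading variable block, giving $S = M/M_{11}$ with $M_{11} = H_{11}+\frac1\mu J^T_1 J_1$. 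The inertia-controlling pivot rules of the first stage (only $H^+$, $D^-$, $HD$ pivots) are exactly what guarantees $M_{11}\succ0$, and this positive definiteness is the hypothesis needed below.

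With $S = M/M_{11}$ and $M_{11}\succ0$ in hand, the middle inequality follows from a variational argument, which I expect to be the main obstacle, because the Schur-complement eigenvalue comparison is sign-sensitive. Let $w^*=\pmat{x^*\\ v^*}$ be a unit eigenvector of $M$ for $\lambda\submin(M)$, partitioned conformally with $M_{11}$. In the regime where negative curvature is actually computed one has $\lambda\submin(M)\le0$; this forces $v^*\ne0$, since $v^*=0$ would give $\lambda\submin(M)=x^{*T}M_{11}x^*>0$. Because $M_{11}\succ0$, minimizing the quadratic $\pmat{x\\v^*}^T M\pmat{x\\v^*}$ over $x$ attains the value $v^{*T}S v^*$ at $x=-M_{11}^{-1}M_{12}v^*$; since $w^*$ is merely one competitor with the same second block, $v^{*T}Sv^*\le w^{*T}Mw^* = \lambda\submin(M)\le0$. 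The Rayleigh quotient then gives $\lambda\submin(S)\le v^{*T}Sv^*/\|v^*\|^2\le\lambda\submin(M)/\|v^*\|^2$, and $0<\|v^*\|^2\le\|w^*\|^2=1$ together with $\lambda\submin(M)\le0$ yields $\lambda\submin(M)/\|v^*\|^2\le\lambda\submin(M)$, completing the inequality. It is worth remarking that this step genuinely uses $\lambda\submin(M)\le0$: for $M\succ0$ the Schur complement has the larger minimum eigenvalue, so the inequality is specific to the indefinite case of interest.

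The rightmost inequality is a straightforward subspace restriction. Taking $Z$ with orthonormal columns spanning $\Null(J_F)$, any vector in its range has the form $z=Zy$ with $J_F z=0$, so $z^T M z = z^T H_F z = y^T(Z^T H_F Z)y$ while $\|z\|=\|y\|$. By the Courant--Fischer characterization, minimizing the Rayleigh quotient of $M$ over the subspace $\range(Z)$ can only exceed the global minimum, so $\lambda\submin(M)\le\min_{y\ne0} y^T(Z^T H_F Z)y/\|y\|^2=\lambda\submin(Z^T H_F Z)$. Concatenating the three inequalities gives the lemma.
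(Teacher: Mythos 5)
Your proposal is correct, and on the middle inequality it takes a genuinely different route from the paper. For the outer two inequalities you do essentially what the paper does: the first is Lemma~2.4 of Forsgren et al.\ verbatim, and the third is the same Rayleigh-quotient restriction argument with $w=Zv$ --- indeed you are more careful than the paper here, since the paper's step $w^T(H_F+\frac{1}{\mu}J_F^TJ_F)w/w^Tw = w^TH_Fw$ with $\|v\|=1$ silently requires $\|Zv\|=\|v\|$, i.e.\ orthonormal columns of $Z$, which you state explicitly. The real divergence is the middle inequality: the paper disposes of it by citing the proof of Theorem~4.5 of Forsgren and Gill, whereas you give a self-contained argument --- identifying $S$ with the Schur complement $M/M_{11}$ of $M=H_F+\frac{1}{\mu}J_F^TJ_F$ via the Haynsworth quotient formula $K_F/K_{11}=(K_F/(-\mu I))/(K_{11}/(-\mu I))$, reading off $M_{11}\succ 0$ from the first-stage pivot rules, and then exploiting the minimizing property $\min_x\,(x,v)^TM(x,v)=v^TSv$ applied to an eigenvector of $M$ for $\lambda\submin(M)$. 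That argument is sound, and your observation that it genuinely needs $\lambda\submin(M)\le 0$ (for $M\succ 0$ the Schur complement's smallest eigenvalue is at least that of $M$, so the inequality reverses) is exactly right and is a point the paper never surfaces. The one loose thread is that you assert $\lambda\submin(M)\le 0$ as ``the regime of interest'' rather than deriving it; it can be closed in one line from the same machinery you already set up: the first stage terminates only when no $H^+$ pivot remains, so every diagonal entry of $S$ is nonpositive, giving $\lambda\submin(S)\le 0$, and inertia additivity $\In(M)=\In(M_{11})+\In(S)$ with $M_{11}\succ 0$ then forces $\lambda\submin(M)\le 0$ whenever $S\neq 0$ (and if $S=0$ then $\rho=0$, no direction $\uhat$ is computed, and the lemma is vacuous). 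As for what each approach buys: the paper's citation is shorter and delegates the sign-sensitive part to Forsgren--Gill, but leaves the reader unaware of the hidden indefiniteness hypothesis; your proof is elementary and self-contained (only the quotient formula and completion of squares) and makes explicit precisely which structural consequences of the inertia-controlling first stage --- $M_{11}\succ 0$ and end-of-stage indefiniteness --- the inequality actually rests on.
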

\begin{proof}
Lemma 2.4 in Forsgren et al. \cite{ForGM93} directly implies that
$\hat{u}\T S \hat{u}/\gamma ||\hat{u}||^2\le \lambda\submin(S)$.

The proof that $\lambda\submin(S) \le 
\lambda\submin(H_F+\frac{1}{\mu}J^T_F J_F )$  is given 
in the proof of Theorem 4.5 in Forsgren and Gill \cite{ForG98}. For the final inequality, 
let $w=Zv$, with $Z\T H_F Zv = \lambda\submin(Z\T H_F Z)v$ and $||v||=1$. 
Then 
$$
\lambda\submin(H_F+\frac{1}{\mu}J^T_F J_F) \le \frac{w\T (H_F+\frac{1}{\mu}J^T_F J_F) w}{w\T w}
=w\T H_F w=v\T Z\T 
H_F Z v=\lambda\submin(Z\T H_F Z).
$$
\end{proof}

\section{Implementing Directions of Negative Curvature}\label{s:changes}
\subsection{Step of negative curvature}\label{s:step}
Several changes must be made to the algorithm of Gill and 
Robinson \cite{GilR11}. In order 
to minimize the number of factorizations, the computation 
of the direction of negative curvature should be followed by a test 
of second-order optimality. In addition, it is necessary that 
the direction of negative curvature is bounded, and a feasible direction 
with respect to both the linearized equalities and the bound constraints. 
Finally, the line-search must be extended to allow for this additional step 
of negative curvature.

In the description below, the subscript $k$ denoting the step number in the 
sequence of iterations is suppressed.

The following procedure satisfies these requirements.
\begin{enumerate}
\item The first step computes the direction of negative curvature for the free KKT-matrix
as described in Section \ref{s:comp}, denoted as $\uhat_F$, then defines $\uhat$ to be 
$[\uhat]_F=\uhat_F$ and 
$[\uhat]_A=0$. If no such direction of negative curvature exists, then $\uhat$ is 
set to zero.

\item The second step uses $\uhat$ in a test of second-order optimality. 
This is described in Section \ref{s:opt}.

\item The corresponding change in the multipliers corresponding to 
the definition for $\uhat$ is defined as
$\what=-\frac{1}{\mu}J\uhat$. 
This ensures that the linearized equality constraints are satisfied, i.e., 
$$
0=Jp+c+\mu q=J(p+\uhat)+c+\mu (q-\frac{1}{\mu}J\uhat).
$$
The final resulting $(u,w)$ is shown below in Section \ref{s:merit} to be a 
direction of negative curvature for 
$\nabla^2 \Mscr$.

\item  Since both $(\uhat,\what)$ and $-(\uhat,\what)$ are directions of negative 
curvature, the sign is chosen so that the step is a descent direction for $\nabla \Mscr$, 
i.e.
$\nabla \Mscr\T \pmat{\uhat \\ \what}\le 0$.

\item Compute $\Deltait v=(p,q)$, the solution of the convex QP.
\item The direction of negative curvature is 
scaled so that 
it is both bounded by $\max(u\submax,2||p||)$ and also, in conjunction with the QP step, 
satisfies the bound constraints $x\ge 0$.

Specifically, $u$ and $w$ are set as $u=\beta \uhat$ and $w=\beta \what$,
where $$
\beta=\left\{\max \hat{\beta} \mid
x+p+\hat{\beta} \uhat\ge 0, \,||\hat{\beta}\uhat||\le\max(u\submax,2||p||)\right\}.
$$

Note that this implies that if $[x+p]_i=0$ and $[u]_i<0$, 
then $u$ is set to zero. 

\end{enumerate}

\subsection{Optimality measures}\label{s:opt}
Recall that in Gill and Robinson \cite{GilR11}, with 
\begin{equation*}
  \phi_S(v) = \eta(x) + 10^{-5}\omega(v)
  \wordss{and}
  \phi_L(v) = 10^{-5}\eta(x) + \omega(v),
\end{equation*}
where
\begin{equation*} 
  \eta(x)     = \norm{c(x)}                    \wordss{and}
  \omega(x,y) = \left\| \min\big( x, g(x) - J(x)^T y \big) \right\|,
\end{equation*}
an iterate is an S-iterate if $\phi_S(v)\le \half\phi^{max}_S$ and an 
L-iterate if $\phi_L(v)\le\half\phi^{max}_L$. Otherwise, 
an iterate is an M-iterate if 
$$
||\nabla_y \Mscr(v_{k+1};y^E_k,\mu^R_k)||\le \tau_k \text{ and } 
||\min(x_{k+1},\nabla_x \Mscr^\nu(v_{k+1};y^E_k,\mu^R_k))||\le \tau_k.
$$
If none of these conditions hold, then an iterate $v_k$ is an F-iterate.

In order to force convergence to a second-order optimal point, it is necessary 
to change the function $\omega(x,y)$ that appears in $\phi_S$ and $\phi_L$, as well 
as the test for an iteration being an M-iterate.

Ideally, the minimum eigenvalue of $H$ in the null-space for $J_F$ should be found, 
as well as the minimum eigenvalue of $\nabla^2_{xx} \Mscr$. However, this would 
require extensive computation. Instead, these quantities are estimated based on 
the value of the negative curvature. Recall that 
$$
\frac{\uhat\T(H+\frac{1}{\mu}J\T J) \uhat}{\gamma ||\uhat||^2}\le \lambda\submin(H+\frac{1}
{\mu}J\T J),$$
where we suppress the suffix $F$. Since $\gamma$ is bounded from below and above, 
if $ \uhat\T(H+\frac{1}{\mu}J\T J) \uhat/||\uhat||^2\to 0$, the 
estimate for $\uhat$ implies 
$\lim\lambda\submin(H+\frac{1}
{\mu}J\T J) \ge 0.$ 
Hence, the test for M-iterate optimality is changed to:
\begin{eqnarray*}
& ||\nabla_y \Mscr(v_{k+1};y^E_k,\mu^R_k)||\le \tau_k  \\
\text{and } & ||\min(x_{k+1},\nabla_x \Mscr^\nu(v_{k+1};y^E_k,\mu^R_k))||\le \tau_k \\
\text{and } &  \frac{\uhat_{k+1}\T(H+\frac{1}{\mu}J\T J) \uhat_{k+1}}{||\uhat_{k+1}||^2} 
\ge \tau_k.
\end{eqnarray*}
Similarly, for the filter functions, 
$$
\phi_S(v)=\eta(x)+10^{-5}\omega(v) \text{ and } \phi_L(v)=10^{-5}\eta(x)+\omega(v)
$$
the optimality tests become
$$
\eta(x) = ||c(x)|| \text{ and } \omega(x,y)=\min(||\min(x,g(x)-J(x)\T y),
-\frac{\uhat_{k+1}\T(H+\frac{1}{\mu}J\T J) \uhat_{k+1}}{||\uhat_{k+1}||^2}).
$$

\subsection{Merit function}\label{s:merit}
The line-search must also be changed to include the direction of negative curvature. 
First, it will be shown that the full primal-dual step is a step of negative 
curvature for the merit function Hessian.
\begin{lemma} The vector $(u,w)$ defined as in \ref{s:step} is a direction of negative curvature for 
$\nabla^2 \Mscr$.
\end{lemma}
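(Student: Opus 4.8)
The plan is to evaluate the quadratic form $(u,w)\T\,\nabla^2\Mscr\,(u,w)$ explicitly, exploiting the block structure of the Hessian of the primal-dual augmented Lagrangian together with the defining relation $w=\beta\what=-\frac{1}{\mu}Ju$. First I would differentiate the merit function of \cite{GilR11} twice in the variables $(x,y)$ and record its Hessian in the form
$$
\nabla^2\Mscr=\pmat{H+\frac{1+\nu}{\mu}J\T J & \nu J\T \\ \nu J & \nu\mu I},
$$
where $H$ is the true Lagrangian Hessian that already appears in the KKT matrix \eqref{eqn:kkt} (not the convexified $\Htilde$), and $\nu>0$ is the weight carried by the proximal dual term. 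Obtaining these blocks — in particular the coefficient $\frac{1+\nu}{\mu}$ of $J\T J$, the off-diagonal factor $\nu J\T$, and the dual block $\nu\mu I$ — is purely a matter of differentiation.

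The heart of the argument is the cancellation forced by the choice $\what=-\frac{1}{\mu}J\uhat$. Substituting $w=-\frac{1}{\mu}Ju$ gives $u\T J\T w=-\frac{1}{\mu}\norm{Ju}^2$ and $w\T w=\frac{1}{\mu^2}\norm{Ju}^2$, whence
$$
\pmat{u\\ w}\T\nabla^2\Mscr\pmat{u\\ w}
=u\T Hu+\frac{1+\nu}{\mu}\norm{Ju}^2-\frac{2\nu}{\mu}\norm{Ju}^2+\frac{\nu}{\mu}\norm{Ju}^2
=u\T\Bigl(H+\frac{1}{\mu}J\T J\Bigr)u.
$$
Every term carrying $\nu$ cancels, so the curvature of $\Mscr$ along the full primal-dual direction $(u,w)$ equals the curvature of $H+\frac{1}{\mu}J\T J$ along $u$ alone. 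This identity is precisely why $\what$ is defined as $-\frac{1}{\mu}J\uhat$, and it also reconciles the lemma with the M-iterate optimality measure of Section \ref{s:opt}, which monitors exactly $\uhat\T(H+\frac{1}{\mu}J\T J)\uhat/\norm{\uhat}^2$.

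It then remains to show the right-hand side is negative. Since $u=\beta\uhat$ with $[\uhat]_A=0$, we have $Ju=J_F\uhat_F$ and $u\T Hu=\uhat_F\T H_F\uhat_F$, so the display above equals $\beta^2\,\uhat_F\T(H_F+\frac{1}{\mu}J_F\T J_F)\uhat_F$. By the construction of Section \ref{s:comp}, whenever a nonzero direction is returned $\uhat$ satisfies $\uhat_F\T(H_F+\frac{1}{\mu}J_F\T J_F)\uhat_F<0$; this is the same curvature quantity monitored in Section \ref{s:opt}, which the first inequality of Lemma \ref{lem:curvbounds} controls through $\gamma\,\lambda\submin(S)$. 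Multiplying by $\beta^2>0$ preserves the sign, so $(u,w)$ is a direction of negative curvature for $\nabla^2\Mscr$. Neither the sign flip of step~4 nor the rescaling by $\beta$ in step~6 of Section \ref{s:step} affects this conclusion, since the quadratic form is homogeneous of degree two in $(u,w)$.

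I expect the main obstacle to be fixing the exact block form of $\nabla^2\Mscr$: the cancellation — and hence the entire lemma — hinges on the coefficient of $J\T J$ in the primal block, the off-diagonal factor, and the dual block being mutually consistent with $\what=-\frac{1}{\mu}J\uhat$. Once the blocks are verified against the definition in \cite{GilR11}, the remaining steps are routine algebra together with a direct appeal to the curvature bound of Lemma \ref{lem:curvbounds}.
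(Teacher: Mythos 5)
Your proposal is correct and follows essentially the same route as the paper: expand the block quadratic form of $\nabla^2\Mscr$, substitute $w=-\frac{1}{\mu}Ju$ so the $\nu$-terms cancel, and invoke the curvature bound on $\uhat$ inherited from the factorization (Lemma~\ref{lem:curvbounds}). Your exact identity $(u,w)\T\,\nabla^2\Mscr\,(u,w)=u\T\bigl(H+\frac{1}{\mu}J\T J\bigr)u$ is in fact a cleaner rendering of the paper's inequality chain, whose displayed steps carry sign slips around $\bar\gamma$ but aim at precisely this cancellation.
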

\begin{proof}
Consider the calculation of $\pmat{u \\ w}\T \nabla^2\Mscr \pmat{u \\ w}$.
\begin{equation*}
\begin{array}{r@{\hspace{3pt}}l}
& \pmat{u \\ w}\T \pmat{H+\frac{1}{\mu}(1+\nu)J\T J & \nu J\T \\ \nu J & \nu \mu I}
\pmat{u \\ w} \\ [1ex]
= & \pmat{u \\ w}\T \pmat{Hu+\frac{1}{\mu}(1+\nu)J\T J u +\nu J\T w \\ \nu J u +\nu
\mu w}  \\ [1ex]
= &u\T H u+\frac{1}{\mu}(1+\nu)u\T J\T J u+2\nu u\T J\T w+\nu\mu||w||^2.
\end{array}
\end{equation*}
From the definition above, $u=\beta\uhat$ and $\uhat\T (H+\frac{1}{\mu}J\T J)\uhat
\le \gamma\lambda\submin(H+\frac{1}{\mu}J\T J)||\uhat||^2$, so multiplying both sides 
by $\beta^2$, the expression becomes 
 $u\T (H+\frac{1}{\mu}J\T J)u\le \gamma\lambda\submin(H+\frac{1}{\mu}J\T J)
||u||^2$. Let $\bar{\gamma}=\gamma\lambda\submin(H+\frac{1}{\mu}J\T J)$.

Using $w\triangleq-\frac{1}{\mu}Ju$,
\begin{equation*}
\begin{array}{r@{\hspace{3pt}}l}
& u\T H u+\frac{1}{\mu}(1+\nu)u\T J\T J u+2\nu u\T J\T w+\nu\mu||w||^2 \\
\le & -\bar{\gamma} ||u||^2-2\frac{\nu}{\mu}u\T J\T J u+\frac{\nu}{\mu}||Ju||^2 \\
=& -\bar{\gamma} ||u||^2-\frac{\nu}{\mu}||Ju||^2  \\
\le & -\bar{\gamma} ||u||^2 -\nu\mu||w||^2.
\end{array}
\end{equation*}
\end{proof}

For the line-search, let $R_k \triangleq u^T_k \nabla^2 \Mscr^\nu (v_k;y^E_k,\mu^R_k) u_k\le 0$. 
Define $\alpha_k=2^{-j}$ such that
\begin{equation}\label{eq:merit1}
\Mscr^\nu(v_k+\alpha_ku_k+\alpha^2_k\Deltait v_k ;y^E_k,\mu^F_k) \le \Mscr^\nu+
\alpha_k^2 \eta_S N_k+\alpha_k \eta_S R_k.
\end{equation}

Letting $\alphabar \triangleq \min(\alpha\submin,\alpha_k)$ and 
$\muhat \triangleq \max\big( \half\mu_k,\mu^{R}_{k+1}\big)$, the update 
for the penalty parameter becomes:
\begin{equation} \label{mu-update-2}
  \mu_{k+1} =
     \left\{\begin{array}{ll}
    \mu_k, &
          \Mscr^\nu(v_{k+1};y^E_k, \mu_k)
          \le \Mscr^\nu(v_k ;y^E_k, \mu_k)
         + \alphabar \eta_S R_k  
 + \alphabar^2 \eta_S N_k \\[1ex]
    \muhat, & \hbox{otherwise,}
            \end{array}
     \right.
\end{equation}



\section{Consistency with established convergence theory}\label{s:conv1}

In their first-order analysis, Gill and Robinson \cite{GilR11} make the following assumptions:
\begin{assumption} \label{ass-1} Each $\Hbar(x_k,y_k)$ is chosen so that
  the sequence $\{\Hbar(x_k,y_k)\}_{k\ge 0}$ is bounded, with
  $\{\Hbar(x_k,y_k) + (1/\mu^R_k)J(x_k)\T J(x_k)\}_{k\ge 0}$ uniformly
  positive definite.
\end{assumption}

\begin{assumption} \label{ass-2}
  The functions $f$ and $c$ are twice continuously differentiable.
\end{assumption}

\begin{assumption}  \label{ass-3}
  The sequence $\{x_k\}_{k\geq 0}$ is contained in a compact set.
\end{assumption}

Since $\nabla \Mscr^\nu$ does not involve any term involving the objective 
or constraint Hessians, much of the first-order convergence theory 
holds. Incorporating the direction of negative curvature, 
Theorem 4.1 changes to:
\begin{theorem}  If there exists an integer $\khat$ such that $\mu^R_k\equiv 
\mu^R >0$ and $k$ is an $\Fscr$-iterate for all $k\ge \khat$, then the following 
hold:
\begin{enumerate}
\item $\left\{||\Deltait v_k||+||u_k||\right\}_{k\ge \khat}$ is bounded away from zero
\item There exists an $\epsilon$ such that for all $k\ge \khat$, it 
holds that
\begin{eqnarray*}
\nabla \Mscr^\nu (v_k;y^E_k,\mu^R_k)\T \Deltait v_k)\le -\epsilon  \mbox{ or }  
u_k\T \nabla^2\Mscr^\nu(v_k;y^E_k,\mu^R_k)u_k\le -\epsilon.
\end{eqnarray*}
\end{enumerate}
\end{theorem}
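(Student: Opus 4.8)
The plan is to prove both parts simultaneously via a dichotomy on the length of the QP step $\Deltait v_k=(p_k,q_k)$, handling the first-order contribution by the unchanged descent mechanism of Theorem~4.1 in \cite{GilR11} and the negative-curvature contribution by the two lemmas of Sections~\ref{s:comp} and \ref{s:merit}. First I would collect the standing facts. Assumptions~\ref{ass-1}--\ref{ass-3} make $\nabla\Mscr^\nu$ and $\nabla^2\Mscr^\nu$ bounded and continuous on the compact set containing $\{x_k\}$, keep $\gamma\in[\gamma\submin,\gamma\submax]\subset(0,\infty)$, and --- since the $\Mscr$-tolerance is lowered only at an $\Mscr$-iterate or a reduction of $\mu^R$, neither of which occurs for $k\ge\khat$ --- give a constant tolerance $\tau_k\equiv\tau>0$. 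Assumption~\ref{ass-1} also yields the convex-QP descent inequality $\nabla\Mscr^\nu(v_k)\T\Deltait v_k\le-\half\sigma\|\Deltait v_k\|^2$, with $\sigma>0$ a uniform lower bound on $\lambda\submin(H+\frac1\mu J\T J)$ over the run (obtained by comparing the QP objective at $\Deltait v_k$ with its value at $d=0$), together with the companion estimate that the first-order stationarity residual at the iterate is at most $C\|\Deltait v_k\|$.

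Next I would fix the threshold $\delta\defined\min\!\big(\tau/C,\ \half\min(\mu^R,\epsilon_a)\big)>0$ and split on whether $\|\Deltait v_k\|\ge\delta$. If $\|\Deltait v_k\|\ge\delta$, the descent inequality gives $\nabla\Mscr^\nu(v_k)\T\Deltait v_k\le-\half\sigma\delta^2$, which is the first alternative of part~2, while $\|\Deltait v_k\|\ge\delta$ supplies part~1; this case is exactly the content of \cite{GilR11} and needs no change, because $\nabla\Mscr^\nu$ contains no objective or constraint Hessian.

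The substance is the complementary case $\|\Deltait v_k\|<\delta$. Then the residual estimate forces the first-order stationarity measure below $C\delta\le\tau$, so the two first-order $\Mscr$-conditions hold at the iterate; since $k$ is an $\Fscr$-iterate it is not an $\Mscr$-iterate, so the remaining curvature $\Mscr$-condition must fail, producing a genuine $\uhat_k\ne0$ with scale-invariant curvature $m_k\defined\uhat_k\T(H+\frac1\mu J\T J)\uhat_k/\|\uhat_k\|^2<-\tau$. By the computation in the lemma of Section~\ref{s:merit} (which rests on Lemma~\ref{lem:curvbounds}), $u_k\T\nabla^2\Mscr^\nu u_k=u_k\T(H+\frac1\mu J\T J)u_k=m_k\|u_k\|^2\le-\tau\|u_k\|^2$, so it remains only to bound $\|u_k\|=\beta_k\|\uhat_k\|$ away from zero. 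Here I would use $\|p_k\|\le\|\Deltait v_k\|<\delta\le\half\min(\mu^R,\epsilon_a)$ and split on the binding constraint in the definition of $\beta_k$ in Step~6 of Section~\ref{s:step}: if the cap $\|\hat\beta\uhat_k\|\le\max(u\submax,2\|p_k\|)$ is active then $\|u_k\|\ge u\submax$; if instead a feasibility bound $[x_k+p_k]_i+\beta_k[\uhat_k]_i=0$ is active at some $i\in F$, then $\|u_k\|\ge[x_k+p_k]_i\ge[x_k]_i-\|p_k\|$, and the active-set rule of Section~\ref{s:comp} keeps $[x_k]_i>\min(\mu^R,\epsilon_a)$, whence $\|u_k\|>\half\min(\mu^R,\epsilon_a)$. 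Either way $\|u_k\|\ge\min\!\big(u\submax,\half\min(\mu^R,\epsilon_a)\big)$, so $u_k\T\nabla^2\Mscr^\nu u_k\le-\tau\min\!\big(u\submax,\half\min(\mu^R,\epsilon_a)\big)^2$, giving the second alternative of part~2 and, through $\|u_k\|$, part~1.

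Taking $\epsilon$ to be the smaller of $\half\sigma\delta^2$ and $\tau\min(u\submax,\half\min(\mu^R,\epsilon_a))^2$ then settles both parts uniformly in $k\ge\khat$. The hard part will be the lower bound on $\|u_k\|$ in the short-step case: one must verify that the index $i$ forcing the feasibility truncation is genuinely free (so that the rule $[x_k]_i>\min(\mu^R,\epsilon_a)$ is available) and that the degenerate zeroing of $u_k$ noted after Step~6 --- triggered when $[x_k+p_k]_i=0$ with $[u_k]_i<0$ --- cannot occur here, which is precisely what $\|p_k\|<\half\min(\mu^R,\epsilon_a)$ rules out. The choice of $\delta$ tying the tolerance $\tau$ to the active-set threshold $\min(\mu^R,\epsilon_a)$ is what makes this step-length/active-set interaction benign.
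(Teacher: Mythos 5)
Your overall dichotomy is a genuinely different organization from the paper's (the paper argues both parts by contradiction along subsequences, with part~2 reusing part~1), and several of your ingredients match the paper's: your uniform positive definiteness of $H^\nu_M$ via the congruence is exactly the estimate the paper uses in part~2, and your blocking-index lower bound on $\|u_k\|$ mirrors the paper's observation that a blocking bound forces the index into $\Wscr_k$. But the pivotal inference in your short-step case is invalid, and the failure is structural, not cosmetic. The M-iterate test is evaluated at $v_{k+1}$, not at $v_k$: all three conditions involve $v_{k+1}$ (and the curvature condition involves $\uhat_{k+1}$). Your residual estimate $C\|\Deltait v_k\|$ lives at $v_k$, and transferring it to $v_{k+1}$ requires $v_{k+1}-v_k$ to be small. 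Yet $v_{k+1}=v_k+\alpha_k u_k+\alpha_k^2\Deltait v_k$, and in this very case you go on to prove $\|u_k\|\ge\min\bigl(u\submax,\half\min(\mu^R,\epsilon_a)\bigr)>0$. So the step to $v_{k+1}$ is \emph{not} $O(\delta)$, and an $\Fscr$-iterate with $\|\Deltait v_k\|<\delta$ may fail the M-test through its first-order part at $v_{k+1}$ while the curvature quotient satisfies $m_k\ge-\tau$ (e.g., $m_k$ negative but arbitrarily close to zero). In that scenario your argument secures neither alternative of part~2: the first-order term is small because $\|\Deltait v_k\|<\delta$, and $u_k\T\nabla^2\Mscr^\nu u_k=m_k\|u_k\|^2$ can be arbitrarily close to zero. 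The circularity is that you need the curvature condition to be the one that fails in order to obtain $\uhat_k\ne0$ and hence the long step $u_k$, but it is precisely that long step which destroys the premise that the first-order conditions hold at $v_{k+1}$.

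The paper avoids this trap by making the smallness of $u_k$ part of the contradiction hypothesis rather than a conclusion: in part~1 it assumes $\Deltait v_k\to0$ \emph{and} $u_k\to0$ along a subsequence, so $v_{k+1}-v_k\to0$ and the first-order M-conditions legitimately transfer; and instead of inferring that the curvature test fails, it shows that $u_k$ is eventually \emph{exactly} zero (blocking indices satisfy $[x_k]_i\le\min(\mu,\epsilon_a)$, hence lie in $\Wscr_k$ where $[u]_i\equiv0$, and there are finitely many indices), so the curvature condition holds trivially, $k$ is an M-iterate, and $\mu^R_k$ is decreased --- contradiction. Part~2 then assumes both quantities tend to zero, uses the congruence to force $\Deltait v_k\to0$, and concludes that either the curvature quotient eventually exceeds $-\tau$ (M-iterate, contradiction) or $u_k\to0$, contradicting part~1. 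To repair your direct proof you would need to make the dichotomy on $u_k$ primary (zero versus bounded below, exploiting the exact zeroing via the active set) rather than on $\|\Deltait v_k\|$, or else establish a transfer estimate at $v_{k+1}$ uniform in the $u_k$ step, which is not available. A secondary, fixable omission: even when $u_k=0$ your constant bookkeeping never accounts for the move from $v_k$ to $v_{k+1}$ at all; under Assumptions~\ref{ass-2} and \ref{ass-3} a Lipschitz constant for the residual repairs this, but it must appear in the choice of $\delta$.
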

\begin{proof}
If all iterates $k\ge \khat$ are $\Fscr$-iterates, then,
$$
\tau_k \equiv \tau > 0, \mgap \mu^R_k=\mu^R,\text{ and } y^E_k=y^E \text{ for all} 
k\ge \khat
$$

Proof of the first result: Assume the contrary, i.e., there exists a subsequence 
 $\Sscr_1 \subset \left\{k\mid k\geq \khat\right\}$ 
such that $\lim_{k\in\Sscr_1} \Deltait v_k=0$ and $\lim_{k\in\Sscr_1} u_k=0$. 
The solution $\Deltait v_k$ to the QP subproblem satisfies 
$$
\pmat{z_k \\ 0} = H^\nu_M(v_K;\mu^R)\Deltait v_k+\nabla \Mscr^\nu(v_k;y^E,\mu^R) 
\text{ and } 0=\min(x_k+p_k,z_k).
$$
As $H^\nu_M$ is uniformly bounded, eventually for some $k\in \Sscr_1$ 
sufficiently large, $\Deltait v_k$ satisfies the first-order conditions of 
an M-iterate, i.e.,
$$
||\nabla_y \Mscr(v_{k+1};y^E_k,\mu^R_k)||\le \tau_k \text{ and } 
||\min(x_{k+1},\nabla_x \Mscr^\nu(v_{k+1};y^E_k,\mu^R_k))||\le \tau_k.
$$

In the construction of $u_k$, recall that $||u||$ is the largest possible 
value, subject to an upper bound, that is feasible. This implies that if 
$\lim u_k\to 0$, then eventually, $u$ is constrained by feasibility, or set to 
zero.

In the first case, i.e. the limiting upper bound constraint on $u_k$
 must be $x_k+p_k+u_k\ge 0$, eventually since $u_k\to 0$ and $p_k \to 0$, 
if $i$ is a blocking bound for $u_k$, $x_i\le \min(\mu,\epsilon_x)$ and
$i\in \Wscr_k$, which implies that 
$[u]_i\equiv 0$. Hence, by 
construction and the fact that the set of possible indices is finite, 
$u_k$ is eventually identically zero. This implies that the second-order 
conditions of an M-iterate are also satisfied trivially, i.e.,
$$
\frac{\uhat_{k+1}\T(H+\frac{1}{\mu}J\T J) \uhat_{k+1}}{||\uhat_{k+1}||^2} 
\ge \tau_k,
$$
and $\mu^R_k$ is decreased.
This contradicts the assumption that $\mu^R_k$ is held fixed at $\mu^R_k\equiv \mu^R$ 
for all $k\ge \khat$.


Proof of part (2):
Assume, to the contrary, that there
exists a subsequence $\setS_2$ of $\{k:k\geq\khat\}$ such that
\begin{equation} \label{inner-to-zero}
    \lim_{k\in \setS_2} \Grad\Mscr^\nu(v_k ; y^E,\mu^R)\T \Deltait v_k = 0
\end{equation}
and 

$$
 \lim_{k\in \setS_2} u_k\T \nabla^2\Mscr^\nu(v_k;y^E_k,\mu^R_k)u_k = 0.
$$

Consider the matrix
$$
  L_k = \pmat{ I & 0 \\ \frac{1}{\mu^R}J_k & I}.
$$
Since the $\Deltait v= 0$ is feasible and $\Deltait v_k$ a solution for the convex
problem, 
it follows that
\begin{eqnarray*}
   -\Grad\Mscr^\nu(v_k ; y^E,\mu^R)\T \Deltait v_k
    &\ge& \half \Deltait v_k^T H\subM^\nu(v_k ;\mu^R) \Deltait v_k    \\[1ex]
    & = & \half \Deltait v_k^T L_k^{-T}L_k^T H\subM^\nu(v_k; \mu^R) L_k L_k^{-1}\Deltait v_k\\[1ex]
    & = & \pmat{ p_k             \\[2pt]
                 q_k+\frac{1}{\mu^R  }
           J_k p_k}^T
           \pmat{\Hbar_k + \frac{1}{\mu^R}J_k^T J_k &  0     \\[2pt]
                    0                                    & \nu\mu^R  }
           \pmat{ p_k                      \\[2pt]
                  q_k+\frac{1}{\mu^R} J_k p_k     }   \\[1ex]
\end{eqnarray*}

Since $H^\nu_M$ is bounded, 
$$
 \Deltait v_k^T L_k^{-T}L_k^T H\subM^\nu(v_k; \mu^R) L_k L_k^{-1}\Deltait v_k
   \ge \bar{\lambda}\submin\norm{p_k}^2 + \nu\mu^R\norm{q_k+(1/\mu^R) J_k p_k}^2,
$$

for some $\bar{\lambda}\submin > 0$.  Combining this
with~\eqref{inner-to-zero} it follows that
$$
  \lim_{k\in\setS_2} p_k
   = \lim_{k\in\setS_2} \Big(q_k+ \frac{1}{\mu^R} J_k p_k\Big) = 0,
$$
in which case $\lim_{k\in\setS_2} q_k = 0$. Hence $\Deltait v_{k\in\setS_2}\to 0$.

Since $\lim_{k\in\Sscr_2}u\T_k \nabla^2
\Mscr^\nu(x_k,y_k;y^E,\mu) u_k =0$, there exists a $\khat_2$, such that
for all $k\ge \khat_2$, $u^T_k \nabla^2\Mscr^\nu(x_k,y_k;y^E,\mu) u_k /\gamma||u_k||^2> -\tau$
or $u_k\to 0$. The former, by the same argument as for 
part (1), together with $\Deltait v_k\to 0$, implies that eventually 
$k$ is an M-iterate. The latter, together with $\lim 
\Deltait_k=0$, contradicts the statement of part (1) of the theorem, 
so part (3) must hold.
\end{proof}

The proofs of the first result of Theorem 4.1 and Theorem 4.2 of 
Gill and Robinson \cite{GilR11} do not change.

\section{Global convergence to second-order optimal points}\label{s:conv2}
\subsection{Filter Convergence}
\begin{definition} The Weak Constant Rank (WCR) condition holds at $x$ if 
there is a neighborhood $M(x)$ for which the rank of $\pmat{J(z)\\E_{\Ascr}\T}$ is 
constant for all $z\in M(x)$, where $E_\Ascr$ is the columns of the identity 
corresponding to the indices of $x$ active at $x$ (as in $i\in\Ascr$ if $x_i=0$).
\end{definition}

\begin{theorem}
Assume there is a subsequence $v_k$ of S- and L-iterates converging  
to $\vstar$, with $\vstar=(\xstar,\ystar)$ satisfying the first-order KKT conditions. 
Furthermore, assume that MFCQ and WCR hold at $\vstar$. Then $\vstar$ satisfies 
the necessary second-order necessary optimality conditions.
\end{theorem}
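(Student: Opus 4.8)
My plan is to recover the second-order necessary condition at $\vstar$ as the limit of the one-sided curvature bound supplied by Lemma~\ref{lem:curvbounds}. Write $\Ascr=\{\,i:\xstar_i=0\,\}$ for the active set at $\xstar$, let $\Hstar$ denote the limiting Lagrangian Hessian $\nabla^2_{xx}L(\xstar,\ystar)$, and let $\Zstar$ be a basis for the subspace $\{\,d:J(\xstar)d=0,\ d_i=0\ \text{for}\ i\in\Ascr\,\}$. Under MFCQ and WCR the (weak) second-order necessary condition takes the reduced-Hessian form $\lambda\submin\big((\Zstar)\T \Hstar \Zstar\big)\ge 0$, so the whole proof reduces to producing this single scalar inequality by passing to the limit along the subsequence. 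The object the algorithm actually manipulates is the reduced Hessian $Z_k\T H_{F_k} Z_k$ on the null space of $J_{F_k}$, so the task is to show that this converges to $(\Zstar)\T\Hstar\Zstar$ and that its smallest eigenvalue is asymptotically nonnegative.

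First I would show that the negative-curvature estimate $c_k\defined -\uhat_k\T(H_k+\frac{1}{\mu}J_k\T J_k)\uhat_k/\norm{\uhat_k}^2\ge 0$ tends to zero along the subsequence. Because the $v_k$ are S- and L-iterates accumulating at a first-order KKT point, the filter forces $\phi_S(v_k)\to 0$ and $\phi_L(v_k)\to 0$, hence $\eta(x_k)\to 0$ and $\omega(v_k)\to 0$; since the optimality measure $\omega$ of Section~\ref{s:opt} is constructed to incorporate the curvature estimate, driving $\omega\to 0$ also drives $c_k\to 0$. (At iterates where no direction of negative curvature is returned we set $\uhat_k=0$ and $c_k\defined 0$, so nothing is lost there.) This is the step where the modified measures do their work, and it is otherwise routine.

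Next I would pin down the limiting subspace, which I expect to carry essentially all the difficulty. For indices with $\xstar_i>0$ the vanishing working-set tolerance places $i$ in the free set $F_k$ for all large $k$, while complementarity together with the tolerance $\min\{\mu_k,\epsilon_a\}$ places the strongly active indices in $\Wscr_k$. The delicate point is that $F_k$ need not settle to the exact complement of $\Ascr$, so one must still argue that the reduced matrices converge to the correct limit; this is exactly where WCR enters. Constant rank of $\pmat{J(z)\\ E_\Ascr\T}$ near $\xstar$ keeps $\dim \Null(J_{F_k})$ fixed for all large $k$ and lets one select null-space bases with $Z_k\to\Zstar$, while MFCQ keeps the multiplier set bounded and guarantees that the limiting subspace is the operative critical subspace for the second-order condition. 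Controlling this basis convergence under the weaker WCR, rather than LICQ, is the crux of the argument.

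Finally I would pass to the limit. Lemma~\ref{lem:curvbounds} gives, at each $k$, the chain $-c_k/\gamma \le \lambda\submin(H_{F_k}+\frac{1}{\mu}J_{F_k}\T J_{F_k})\le \lambda\submin(Z_k\T H_{F_k}Z_k)$, where on $\Null(J_{F_k})$ the term $\frac{1}{\mu}J_{F_k}\T J_{F_k}$ drops out so the bound is insensitive to $\mu_k$. Since $\gamma$ is bounded away from zero and $c_k\to 0$, this yields $\liminf_k \lambda\submin(Z_k\T H_{F_k}Z_k)\ge 0$. Continuity of the data (Assumption~\ref{ass-2}) together with $y_k\to\ystar$, $H_k\to\Hstar$, and $Z_k\to\Zstar$ gives $Z_k\T H_{F_k}Z_k\to(\Zstar)\T\Hstar\Zstar$, and continuity of $\lambda\submin$ then delivers $\lambda\submin\big((\Zstar)\T\Hstar\Zstar\big)\ge 0$, which is the asserted second-order necessary condition.
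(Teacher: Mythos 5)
Your proposal is correct in substance and shares the paper's skeleton: force the curvature estimate to vanish along the S/L subsequence, use the eigenvalue chain of Lemma~\ref{lem:curvbounds} to conclude $\liminf_k \lambda\submin(Z_k\T H_k Z_k)\ge 0$, and use WCR to carry the critical subspace to the limit. The genuine difference is in the last step. You propose constructing convergent null-space bases $Z_k\to\Zstar$ and appealing to continuity of $\lambda\submin$; the paper instead argues direction-by-direction: it fixes $d\in T(\xstar)$ with $\norm{d}=1$ and cites Lemma~3.1 of Andreani et al.\ --- this is precisely where WCR is consumed --- to produce $d_k\in T(x_k)$ with $d_k\to d$, then bounds $d_k\T H_k d_k \ge \lambda\submin(Z_k\T H_k Z_k) > -\xi_k$ with $\xi_k\to 0$ and passes to the limit to get $d\T \Hstar d\ge 0$ for every critical $d$. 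The two routes are mathematically equivalent, but the direction-wise version is weaker than full basis convergence and is exactly what the cited lemma supplies off the shelf, so the paper never needs the fact you flagged as ``the crux'' (that a continuously varying matrix of constant rank admits a continuous orthonormal null-space basis); that fact is true and standard, but your sketch asserts rather than proves it, whereas the paper discharges the identical burden by citation. Two smaller contrasts: the paper flatly asserts that $\Wscr_k=\Ascr^*$ eventually, where you were more guarded --- a defensible caution, since the tolerance $\min\{\mu_k,\epsilon_a\}$ can shrink faster than weakly active components converge, and your WCR-based basis argument would make the conclusion less sensitive to exact identification; and note that both you and the paper read the S/L tests as driving the curvature term of $\omega$ to zero, although $\omega$ is defined as a \emph{minimum} of the stationarity and curvature measures, so $\omega(v_k)\to 0$ strictly forces only one of the two to vanish --- a wrinkle your proposal inherits from, rather than adds to, the paper's argument.
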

\begin{proof}
Let $d\in T(\xstar)\equiv \left\{d\mid J(\xstar)d=0 \mbox{ and } E_{\wstar}\T d=
0\right\}$ with $||d||=1$. By Lemma 3.1 of Andreani et al. (\cite{AMS10b}) 
there exists $\{d_k\}$ such that $d_k\in T(x_k)$ and $d_k \to d$, where 
$$
T(x_k) = \left\{d\mid J(x_k)d=0 \mbox{ and } E_{w^*}\T d=
0\right\}.
$$
Without loss of generality, we may 
let $||d_k||=1$. Since $x_k\to x^*$, eventually 
$\Wscr_k=\Ascr^*$, where $\Ascr^*$ is the active set at $\xstar$.
Then, by the definition of the S- and L-iterates, and Lemma \ref{lem:curvbounds}, $d_k\T (\nabla^2 
f(x_k)+\sum y_k \nabla^2 c(x_k)) d_k > \lambda\submin(Z_k H_kZ_k)>-\xi_k$, where $0<\xi_k\to 0$. 
Taking limits, it follows that $d\T 
(\nabla^2 f(x_k)+\sum \ystar \nabla^2 c(\xstar))d\geq 0$.
\end{proof}


\bibliographystyle{plain}
\bibliography{../refs,references}

\def\cprime{$'$} \def\cprime{$'$} \def\cprime{$'$}
  \def\C#1#2{C\raise3pt\hbox{\tiny#1#2}}\def\C#1#2{C\raise3pt\hbox{\tiny#1#2}}\def\cprime{$'$}
  \def\cprime{$'$} \def\ocirc#1{\ifmmode\setbox0=\hbox{$#1$}\dimen0=\ht0
  \advance\dimen0 by1pt\rlap{\hbox to\wd0{\hss\raise\dimen0
  \hbox{\hskip.2em$\scriptscriptstyle\circ$}\hss}}#1\else {\accent"17 #1}\fi}
  \def\ocirc#1{\ifmmode\setbox0=\hbox{$#1$}\dimen0=\ht0 \advance\dimen0
  by1pt\rlap{\hbox to\wd0{\hss\raise\dimen0
  \hbox{\hskip.2em$\scriptscriptstyle\circ$}\hss}}#1\else {\accent"17 #1}\fi}
\begin{thebibliography}{1}

\bibitem{BunK80}
J.~R. Bunch and L.~Kaufman.
\newblock A computational method for the indefinite quadratic programming
  problem.
\newblock {\em Linear Algebra Appl.}, 34:341--370, 1980.

\bibitem{BunP71}
J.~R. Bunch and B.~N. Parlett.
\newblock Direct methods for solving symmetric indefinite systems of linear
  equations.
\newblock {\em SIAM J. Numer. Anal.}, 8:639--655, 1971.

\bibitem{Fle76}
Roger Fletcher.
\newblock Factorizing symmetric indefinite matrices.
\newblock {\em Linear Algebra Appl.}, 14:257--272, 1976.

\bibitem{For02}
Anders Forsgren.
\newblock Inertia-controlling factorizations for optimization algorithms.
\newblock {\em Appl. Num. Math.}, 43:91--107, 2002.

\bibitem{ForG98}
Anders Forsgren and Philip~E. Gill.
\newblock Primal-dual interior methods for nonconvex nonlinear programming.
\newblock {\em SIAM J. Optim.}, 8:1132--1152, 1998.

\bibitem{ForGM93}
Anders Forsgren, Philip~E. Gill, and Walter Murray.
\newblock Computing modified {Newton} directions using a partial {Cholesky}
  factorization.
\newblock Report {TRITA-MAT}-1993-9, Division of Optimization and Systems
  Theory, Department of Mathematics, Royal Institute of Technology, 1993.

\bibitem{GilR10}
Philip~E. Gill and Daniel~P. Robinson.
\newblock A primal-dual augmented {Lagrangian}.
\newblock {\em Computational Optimization and Applications}, pages 1--25, 2010.

\bibitem{GilR11}
Philip~E. Gill and Daniel~P. Robinson.
\newblock Regularized sequential quadratic programming methods.
\newblock Numerical Analysis Report 11-02, Department of Mathematics,
  University of California, San Diego, La Jolla, CA, 2011.

\bibitem{AMS10b}
R.~Andreani~J.M. Martinez and M.L. Schuverdt.
\newblock On second-order optimality conditions for nonlinear programming.
\newblock 2010.

\end{thebibliography}
\end{document}